\newtheorem{theorem}{\noindent{\bf Theorem}}[section]
\newtheorem{definition}[]{\noindent{\bf Definition}}[section]
\newtheorem{proposition}[]{\noindent{\bf Proposition}}[section]
\newtheorem{lemma}[]{\noindent{\bf Lemma}}[section]
\newtheorem{example}[]{\noindent{\bf Example}}[section]
\newtheorem{remark}[]{\noindent{\bf Remark}}[section]
\begin{document}

\title[]{
Isometric immersions of constant curvature one metrics on the 2-sphere with two conical singularities into Euclidean 3-space: A partial solution to the G\'{a}lvez-Hauswirth-Mira problem}
\author {Zhiqiang Wei}


\begin{abstract}
  In this paper, we establish a geometric correspondence between constant curvature one metrics with two conical singularities on $S^{2}$ and isometric immersions into Euclidean 3-space $\mathbb{E}^{3}$. Specifically, we explicitly construct a family of surfaces with constant curvature one, each of which is endowed with two conical singularities. This construction provides a partial solution to an open problem proposed by G\'{a}lvez, Hauswirth, and Mira (Adv in Math. 241(2013) 103-126).

\vspace*{2mm}

\noindent{\bf Key words}\hskip3mm  Constant curvature one, Conical singularities, Isometric immersion, Euclidean space.
\vspace*{2mm}\\
\noindent{\bf 2020 MR Subject Classification:}\hskip3mm 53A05, 53C42.
\thispagestyle{empty}

\end{abstract}
\maketitle

\section{Introduction}
\setcounter{equation}{0}
The isometric immersion problem for Riemannian surfaces into Euclidean 3-space $\mathbb{E}^{3}$ represents a classical challenge in differential geometry, encompassing both the existence and uniqueness (up to rigid motions) of such immersions. Of particular relevance to our work are constant Gaussian curvature one metrics with conical singularities on Riemann surfaces, a subject extensively studied in geometric analysis (see Eremenko's survey \cite{Er21} for historical context). The rich literature on these metrics (see \cite{DFA11,Br88,Xu15,CL91,Er04,Er23,LSX21,LT92,MZ20,MZ22,McO88,MD16,MD19,Tah22,Tr89,Tr91,UY00,WWX24}) provides a strong foundation for our investigation into their isometric immersion properties. \par

In 2013, G\'{a}lvez, Hauswirth, and Mira \cite{JLP13} formulated the following open problem:\par
\textbf{Open problem.} Can one realize any conformal metric of constant curvature one on $S^{2}$ with a finite number of conical singularities as the intrinsic metric of an immersion $K=1$ surface in $\mathbb{E}^{3}$.\par

In their work \cite{JLP13}, they provided examples for the case of metrics with two conical singularities and conical angles in $(0,2\pi)$ using rotationally symmetric surfaces. However, the case of larger conical angles remains unresolved. In this paper, building upon Troyanov's characterization \cite{Tr89} of constant curvature one metrics with two conical singularities, we focus on this specific case. Our work addresses this gap through the following construction, which provides a partial solution to the problem posed by G\'{a}lvez, Hauswirth, and Mira.

\begin{theorem}\label{Mthm}
Let $\alpha>0$ and $0<B<1$ with $\lambda=\frac{\alpha}{B}\in \mathbb{Z}^{+}$. Define coordinate functions:
\begin{equation*}
\begin{cases}
x^{1}(u,\theta)=B\sin u\cos(\lambda\theta),\\
x^{2}(u,\theta)=B\sin u\sin(\lambda\theta),\\
x^{3}(u,\theta)=\int^{u}_{0}\sqrt{1-B^{2}\cos^{2}t}\,dt,
\end{cases}
\end{equation*}
for $(u,\theta)\in (0,\pi)\times [0,2\pi)$. Regard $S^{2}$ as $\mathbb{C}\cup \{\infty\}$ and define $F:\mathbb{C}\setminus\{0\}\rightarrow \mathbb{E}^{3}$ by $z=re^{\sqrt{-1}\theta}\mapsto F(r,\theta)$, where
$$F(r,\theta)=(x^{1}(2\arctan r,\theta),x^{2}(2\arctan r,\theta),x^{3}(2\arctan r,\theta)).$$
This map is an isometric immersion inducing a football metric on $S^{2}$ with conical angles $2\pi\alpha$ at $0$ and $\infty$.
\end{theorem}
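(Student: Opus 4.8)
The plan is to compute the first fundamental form of the map $X(u,\theta):=(x^{1},x^{2},x^{3})$ directly in the $(u,\theta)$ parametrization, recognize it as the standard football metric, and then read off the conical data and the immersion property after passing to the coordinate $z=re^{\sqrt{-1}\theta}$ with $r=\tan(u/2)$.

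First I would record the partial derivatives of $X$ and compute the coefficients $E=|X_{u}|^{2}$, $F=X_{u}\cdot X_{\theta}$, $G=|X_{\theta}|^{2}$. The choice $x^{3}(u)=\int_{0}^{u}\sqrt{1-B^{2}\cos^{2}t}\,dt$ is engineered precisely so that $(x^{3}_{u})^{2}=1-B^{2}\cos^{2}u$ cancels the $\cos^{2}u$ contribution of the first two coordinates, giving $E=1$; one also gets $F=0$ and $G=B^{2}\lambda^{2}\sin^{2}u$. Since $\lambda=\alpha/B$, so that $B\lambda=\alpha$, the induced metric in these coordinates is
\[
I=du^{2}+\alpha^{2}\sin^{2}u\,d\theta^{2},\qquad (u,\theta)\in(0,\pi)\times[0,2\pi).
\]
Because $\sin u>0$ on $(0,\pi)$, the vectors $X_{u},X_{\theta}$ are everywhere orthogonal and nonzero, so $X$ is an immersion on $(0,\pi)\times[0,2\pi)$; and because $\lambda\in\mathbb{Z}^{+}$, the functions $\cos\lambda\theta$ and $\sin\lambda\theta$ are $2\pi$-periodic, so $X$ together with all of its derivatives agrees at $\theta=0$ and $\theta=2\pi$. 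Hence, under the substitution $z=re^{\sqrt{-1}\theta}$, $u=2\arctan r$ (which identifies $\mathbb{C}\setminus\{0\}$ with $(0,\pi)\times[0,2\pi)$ in polar coordinates), the map $F$ is a well-defined smooth immersion of $\mathbb{C}\setminus\{0\}$ into $\mathbb{E}^{3}$ whose pullback of the Euclidean metric is $I$. (It need not be injective: when $\lambda>1$ it wraps $S^{2}\setminus\{0,\infty\}$ exactly $\lambda$ times around a surface of revolution, and this multiple covering is the mechanism that produces conical angles $\ge 2\pi$, which is the range left open in \cite{JLP13}.)

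Next I would identify $I$ as a football metric of constant curvature one with two conical singularities. A warped-product metric $du^{2}+f(u)^{2}d\theta^{2}$ has Gaussian curvature $-f''/f$, and for $f(u)=\alpha\sin u$ this is identically $1$. To see the conical structure, substitute $r=\tan(u/2)$, so that $du=\tfrac{2}{1+r^{2}}dr$ and $\sin u=\tfrac{2r}{1+r^{2}}$, and obtain
\[
I=\frac{4}{(1+r^{2})^{2}}\bigl(dr^{2}+\alpha^{2}r^{2}\,d\theta^{2}\bigr).
\]
As $r\to 0$ this is asymptotic to $4(dr^{2}+\alpha^{2}r^{2}d\theta^{2})$, which after the rescaling $r\mapsto 2r$ is the standard flat cone metric $d\rho^{2}+\alpha^{2}\rho^{2}d\theta^{2}$ with $\theta\in[0,2\pi)$, i.e.\ a cone of angle $2\pi\alpha$; so $I$ extends across $z=0$ with a conical singularity of angle $2\pi\alpha$. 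Replacing $z$ by $1/z$ (equivalently $r$ by $1/r$) leaves the displayed formula form-invariant, so the same holds at $z=\infty$. Together with $K\equiv 1$ this identifies the completed metric on $S^{2}=\mathbb{C}\cup\{\infty\}$ as a football metric of constant curvature one with conical angles $2\pi\alpha$ at $0$ and $\infty$ in the sense of Troyanov \cite{Tr89}, and $F$ is an isometric immersion of $(S^{2}\setminus\{0,\infty\},I)$ into $\mathbb{E}^{3}$, proving the theorem.

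I do not anticipate a real obstacle: the whole argument reduces to the first-fundamental-form computation, and the definition of $x^{3}$ is reverse-engineered to make $E=1$. The only points needing care are (i) verifying that $F$ truly descends to a single-valued immersion on $\mathbb{C}\setminus\{0\}$ — this is where the integrality hypothesis $\lambda\in\mathbb{Z}^{+}$ is used, together with $0<B<1$, which is needed both for $x^{3}$ to be real-valued and for the profile curve $u\mapsto(B\sin u,x^{3}(u))$ to be regular — and (ii) keeping track of the normalization of the cone angle when passing among the $(u,\theta)$ chart, the $z$ chart, and the $1/z$ chart.
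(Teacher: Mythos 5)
Your proposal is correct and follows essentially the same route as the paper: both compute the first fundamental form in the $(u,\theta)$ chart, find $I={\rm d}u^{2}+\alpha^{2}\sin^{2}u\,{\rm d}\theta^{2}$ using $B\lambda=\alpha$, and conclude that $F$ is an immersion inducing the football metric. The only (harmless) difference is that you verify the curvature via the warped-product formula and check the cone angles $2\pi\alpha$ at $0$ and $\infty$ directly in the $z$-coordinate, whereas the paper delegates this identification to its Lemma \ref{Lem} on the geodesic-coordinate form of football metrics; your version is slightly more self-contained on that point.
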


\begin{remark}
(i) For $\lambda\neq1$, the immersion breaks rotational symmetry. \par

(ii) When $\alpha\in(0,1)$  and $\lambda=1$ (thereby enforcing $B=\alpha$), the mapping $F$ recovers the rotationally symmetric isometric embedding first introduced in \cite{JLP13}, corresponding to the classical peek sphere described therein.\par

(iii) In the case $\alpha>1$ (thereby forcing $\lambda\geq2$),  the mapping
$F$ produces non-embedded surfaces with self-intersections.
\end{remark}

Let $\rm ds^{2}$ be a constant curvature one metric on $S^{2}$ with two conical singularities. Suppose the two conical angles are both $2\pi\alpha$. If $\rm ds^{2}$ is not a football metric, then $\alpha$ satisfies $2\leq\alpha\in\mathbb{Z}^{+}$(\cite{Tr89}). In this case, an isometric immersion can be explicitly constructed using the following commutative diagram.
$$ \xymatrix{
  S^{2} \ar[rr]^{\tau} \ar[dr]_{F}
                &  &    S^{2} \ar[dl]^{i}    \\
                & \mathbb{E} ^{3}                }$$
Here $\tau$ denotes a branched covering map with just two branching points, $i$ is the natural inclusion and $F=i\circ \tau$ defines the isometric immersion. Then we have the following theorem.

\begin{theorem}\label{Mthm2}
Every constant curvature one metric on $S^{2}$ with two conical singularities admits a regular isometric immersion into $\mathbb{E}^{3}$, smooth away from the singularities.
\end{theorem}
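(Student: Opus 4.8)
The plan is to combine Troyanov's classification \cite{Tr89} of constant curvature one metrics with two conical singularities on $S^{2}$ with Theorem \ref{Mthm} and the commutative diagram described above, treating the football and the non-football cases separately. By \cite{Tr89} such a metric $ds^{2}$ has two equal conical angles, say $2\pi\alpha$ with $\alpha>0$, and it is either a football metric or, if it is not, then $2\le\alpha\in\mathbb{Z}^{+}$; so these two cases are exhaustive.

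In the football case I would invoke Theorem \ref{Mthm} directly. For every $\alpha>0$ one can pick $B\in(0,1)$ and $\lambda\in\mathbb{Z}^{+}$ with $\lambda=\alpha/B$: take $B=\alpha$, $\lambda=1$ when $\alpha<1$, and take any integer $\lambda>\alpha$ together with $B=\alpha/\lambda$ when $\alpha\ge 1$. For such a choice, Theorem \ref{Mthm} provides a map $F$ which is a regular isometric immersion of $S^{2}$ into $\mathbb{E}^{3}$, smooth on $S^{2}\setminus\{0,\infty\}$, whose induced metric is a football metric with conical angles $2\pi\alpha$ at $0$ and $\infty$. Since a football metric is determined up to isometry by its (equal) pair of conical angles, this $F$ realizes $ds^{2}$, regardless of the particular admissible $(B,\lambda)$ chosen.

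In the remaining case $\alpha=n\ge 2$ is an integer and, by Troyanov's characterization \cite{Tr89}, $ds^{2}$ is isometric to $\tau^{*}g$, where $g$ is the round metric of curvature one on the unit sphere $S^{2}\subset\mathbb{E}^{3}$ (that is, $g=i^{*}g_{\mathbb{E}^{3}}$ for the natural inclusion $i$) and $\tau\colon S^{2}\to S^{2}$ is a degree $n$ branched covering whose branch points are exactly the two conical points, each of ramification index $n$; note this is consistent with Riemann--Hurwitz, $2=2n-(n-1)-(n-1)$. Setting $F=i\circ\tau$ as in the diagram above, one gets $F^{*}g_{\mathbb{E}^{3}}=\tau^{*}i^{*}g_{\mathbb{E}^{3}}=\tau^{*}g=ds^{2}$ on the complement of the two singular points. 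On that complement $\tau$ is a local diffeomorphism and $i$ is a smooth embedding, so $F$ is a smooth immersion there; at the two branch points $F$ carries precisely the conical singularities of $ds^{2}$, the cone angle $2\pi n$ being forced by the local holomorphic normal form $z\mapsto z^{n}$ of $\tau$. Hence $F$ is a regular isometric immersion, smooth away from the two singularities, which settles this case and completes the proof.

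The routine points — that admissible parameters $(B,\lambda)$ always exist in the football case, and that $F=i\circ\tau$ is an immersion precisely off the two branch points — are elementary. I expect the real content to be the appeal to \cite{Tr89} in the non-football case: the decisive fact is not merely that $ds^{2}$ has constant curvature one, but that such a metric is genuinely the pullback of the round metric of the unit sphere under a branched covering with only two branch points, after which the construction $F=i\circ\tau$ and the analysis of $F$ at the branch points are immediate.
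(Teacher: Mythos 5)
Your proposal is correct and follows essentially the same route as the paper: the football case is handled by Theorem \ref{Mthm} (choosing admissible $B$ and $\lambda$ with $\lambda B=\alpha$), and the non-football case, where Troyanov's classification forces $\alpha\in\mathbb{Z}^{+}$, $\alpha\ge 2$, is handled by writing the metric as the pullback of the round metric under a branched covering $\tau$ with exactly two branch points and setting $F=i\circ\tau$, exactly as in the paper's commutative diagram. Your added details (existence of admissible $(B,\lambda)$, uniqueness of the football metric for a given angle, the Riemann--Hurwitz check, and the local form $z\mapsto z^{n}$ at the branch points) only make explicit what the paper leaves implicit.
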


 It is well-known that solving the Gauss and Codazzi equations is essential for addressing isometric immersion problems in
$\mathbb{E}^{3}$. If the underlying surface is simply connected and the Gauss and Codazzi equations admit a solution, the existence of an isometric immersion can be established. However, obtaining an explicit expression for the immersion is generally challenging. Motivated by the form of rotationally symmetric surfaces, we successfully derive the explicit expression presented in \textbf{Theorem} \ref{Mthm}.\par

 This paper is organized as follows. In Sect.2, we review the classification of constant curvature one with two conical singularities on $S^{2}$ and provide the explicit expressions for football metrics on geodesic coordinates. In Sect.3, we present the proof of \textbf{Theorem} \ref{Mthm}. In Sect.4, we elaborate on the derivation of the immersion expressions stated in Sect.3. Finally, in Sect.5, we illustrate several visualizations of footballs in $\mathbb{E}^{3}$ for various parameter values.

\section{Preliminary}

\begin{definition}(\cite{Tr91})
Let $M$ be a Riemann surface,  and let $p\in M$. A conformal metric $\rm ds^{2}$ on $M$ is said to have a conical singularity at $p$ with the singular angle $2\pi\alpha(\alpha>0,\alpha\neq1)$ if, in a neighborhood of $p$,
$${\rm ds^{2}}=e^{2\varphi}|{\rm d}z|^{2},$$
where $z$ is a local complex coordinate defined in the neighborhood of $p$ with $z(p)=0$ and
 $$\varphi-(\alpha -1)\ln|z|$$
is continuous at $0$.
\end{definition}

In \cite{Tr89}, Troyanov classified the constant curvature one metric with two conical singularities on $S^{2}$. Recently, Wei, Wu and Xu \cite{WWX24} provided a new proof of this classification.

\begin{theorem}(\cite{Tr89},\cite{WWX24})
Regard $S^{2}$ as $\mathbb{C}\cup\{\infty\}$. Let ${\rm ds^{2}}$ be a spherical conical metric on $\mathbb{C}\cup\{\infty\}$ with conical singularities at $z=0$ and $z=\infty$,
each with conical angles $2\pi\alpha$ and $2\pi\alpha$ respectively, where $0<\alpha\neq1$. Then, up to a change of coordinate $z\mapsto pz$, where $p\in \mathbb{C}\setminus\{0\}$ is a constant, the metric can be expressed as follows.
\begin{enumerate}
\item If $\alpha\notin~\mathbb{Z}^{+}$, the metric is given by ${\rm ds^{2}}=\frac{4\alpha^{2}|z|^{2(\alpha-1)}}{(1+|z|^{2\alpha})^{2}}|{\rm d}z|^{2}$.
\item If $\alpha\in \mathbb{Z}^{+}$, the metric is given by ${\rm ds^{2}}=\frac{4\alpha^{2}|z|^{2(\alpha-1)}}{(1+|z^{\alpha}+b|^{2})^{2}}|{\rm d}z|^{2}$, where $b\in\mathbb{R}$ is a constant.
\end{enumerate}
\end{theorem}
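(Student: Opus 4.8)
The plan is to pass to the developing map of the spherical metric and study its Schwarzian derivative, thereby reducing the classification to a rigidity statement for quadratic differentials on $\mathbb{CP}^{1}$. Since a metric of constant curvature one is locally isometric to the round sphere, on the twice-punctured sphere $\mathbb{C}^{*}=S^{2}\setminus\{0,\infty\}$ it carries a (multivalued) developing map $f$ into $\mathbb{CP}^{1}$ with
\[
ds^{2}=\frac{4|f'|^{2}}{(1+|f|^{2})^{2}}|dz|^{2},
\]
where $f$ is a local biholomorphism (the metric being smooth and nondegenerate off the cone points) and is determined up to post-composition by its monodromy, which lies in $PSU(2)$, the isometry group of the Fubini--Study metric. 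Because the Schwarzian $\{f,z\}$ is invariant under M\"{o}bius post-composition, it is insensitive to the monodromy, so $q:=\{f,z\}\,dz^{2}$ descends to a single-valued meromorphic quadratic differential on $S^{2}$, holomorphic away from the two cone points.

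Next I would determine $q$ completely. The local theory of conical singularities tells us that near a cone point of angle $2\pi\alpha$ the developing map is M\"{o}bius-equivalent to $z^{\alpha}$ times a holomorphic unit, the conical condition guaranteeing elliptic monodromy and ruling out logarithmic terms; computing $\{z^{\alpha},z\}=\frac{1-\alpha^{2}}{2z^{2}}$ shows that $q$ has at most a double pole at each of $0$ and $\infty$ with leading coefficient $\frac{1-\alpha^{2}}{2}$. The rigidity step is then: since the degree of $K^{\otimes 2}$ on $\mathbb{CP}^{1}$ is $-4$, a meromorphic quadratic differential whose only poles are double poles at $0$ and $\infty$ must equal $\frac{a}{z^{2}}\,dz^{2}$, as any $\frac{1}{z}$ or higher-order contribution would force a pole of order greater than two at $\infty$. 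Matching leading coefficients pins the Schwarzian down entirely: $\{f,z\}=\frac{1-\alpha^{2}}{2z^{2}}$.

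I would then integrate this equation via the associated linear ODE $y''+\frac{1-\alpha^{2}}{4z^{2}}\,y=0$, an Euler equation with indicial roots $\frac{1\pm\alpha}{2}$, so that $f$ is a M\"{o}bius transformation of $z^{\alpha}$, that is $f=g(z^{\alpha})$ with $g\in PSL(2,\mathbb{C})$. It remains to impose that the monodromy $gD_{\alpha}g^{-1}$, with $D_{\alpha}\colon w\mapsto e^{2\pi i\alpha}w$, lies in $PSU(2)$, and to exploit the residual freedoms, namely post-composition by $PSU(2)$ and the coordinate change $z\mapsto pz$. If $\alpha\notin\mathbb{Z}$, then $D_{\alpha}$ is a nontrivial rotation and the condition forces $g$ to be a rotation composed with a diagonal map; normalizing yields $f=z^{\alpha}$ and hence form (1). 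If $\alpha\in\mathbb{Z}^{+}$, then $D_{\alpha}=\mathrm{id}$, the monodromy is automatically trivial for every $g$, and the metric is encoded by the unordered pair of branch points $g(0),g(\infty)\in S^{2}$; using $PSU(2)$ to send $g(\infty)$ to $\infty$ and $g(0)$ onto the positive real axis, then absorbing the leftover scale by $z\mapsto pz$, reduces $g$ to translation by a real constant, giving $f=z^{\alpha}+b$ with $b\in\mathbb{R}$ and hence form (2). A short check confirms that in both normal forms the cone angle at each of $0$ and $\infty$ is exactly $2\pi\alpha$, consistent with the hypothesis.

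The main obstacle I anticipate is the integer case: isolating the genuine one-real-parameter modulus $b$ and verifying that taking $b$ real exhausts every metric up to the allowed coordinate change. This requires careful bookkeeping of how the $PSU(2)$ post-composition, the scaling $z\mapsto pz$, and the positions of the two branch points on $S^{2}$ interact, the underlying invariant being essentially the spherical distance between the branch points. By contrast, the non-integer case is rigid: the nontrivial rotational monodromy leaves no freedom beyond the scaling already used to normalize $z^{\alpha}$.
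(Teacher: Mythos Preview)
The paper does not prove this theorem; it is quoted from Troyanov \cite{Tr89} and Wei--Wu--Xu \cite{WWX24} as a preliminary result, so there is no in-paper argument to compare against. That said, your developing-map/Schwarzian route is precisely the classical mechanism behind such classifications, and your outline is mathematically sound: the Schwarzian is monodromy-invariant and hence single-valued on $\mathbb{C}^{*}$; the pole-order count on $\mathbb{CP}^{1}$ correctly forces $\{f,z\}=\dfrac{1-\alpha^{2}}{2z^{2}}$ (any $1/z$ term would create a triple pole at $\infty$ in the coordinate $w=1/z$); the associated Euler equation has pure power solutions for every $\alpha>0$ (no logarithms, since the indicial roots $\tfrac{1\pm\alpha}{2}$ are distinct), giving $f=g(z^{\alpha})$; and your monodromy dichotomy---nontrivial rotation for $\alpha\notin\mathbb{Z}$ forcing $g$ into $PSU(2)\cdot\{\text{diagonals}\}$, trivial monodromy for $\alpha\in\mathbb{Z}^{+}$ leaving the full $PSL(2,\mathbb{C})$---is exactly right.

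Two small remarks. First, in the integer case your normalization actually yields $b\ge 0$, slightly sharper than the $b\in\mathbb{R}$ in the statement; this is harmless since the theorem only asserts the metric \emph{can} be written in that form. Second, the point you flag as the main obstacle---tracking the one real modulus in the integer case---is exactly the content of the observation that the $PSU(2)$-invariant of the pair $\{g(0),g(\infty)\}\subset S^{2}$ is their spherical distance, which you already identify; once you send $g(\infty)\to\infty$ and rotate $g(0)$ onto $\mathbb{R}_{\ge 0}$, the residual affine factor is absorbed by $z\mapsto pz$ because $\alpha\in\mathbb{Z}^{+}$ makes $p\mapsto p^{\alpha}$ surjective on $\mathbb{C}^{*}$. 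So the anticipated difficulty is not a genuine gap.
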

In \cite{WWX24}, Wei, Wu and Xu introduced the definition of a football metric, which was used to construct constant curvature one metric with finitely many conical singularities on compact Riemann surfaces.
\begin{definition}(\cite{WWX24})
Let ${\rm ds^{2}}$ be a constant curvature one  metric with two conical singularities on a 2-sphere. If the distance between these two singularities is $\pi$, we define ${\rm ds^{2}}$ as a football metric. Furthermore, a 2-sphere equipped with a football metric is referred to as a football.
\end{definition}

By direct calculation, one can easily obtain the following lemma:
\begin{lemma}\label{Lem}
Let ${\rm ds^{2}}$ be a football metric on $S^{2}$ with conical angles $2\pi\alpha$ and $2\pi\alpha$. Then, under the geodesic coordinates, ${\rm ds^{2}}$ can be expressed as
$${\rm ds^{2}}={\rm d}r^{2}+(\alpha\sin r)^{2}{\rm d}\theta^{2},$$
where $(r,\theta)\in(0,\pi)\times[0,2\pi)$.
\end{lemma}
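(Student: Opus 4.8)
The plan is to reduce the football metric to a single explicit normal form by means of the classification recalled above, and then change coordinates. By that classification, after the normalization $z\mapsto pz$ the metric is $\frac{4\alpha^{2}|z|^{2(\alpha-1)}}{(1+|z|^{2\alpha})^{2}}|{\rm d}z|^{2}$ whenever $\alpha\notin\mathbb{Z}^{+}$, and it is $\frac{4\alpha^{2}|z|^{2(\alpha-1)}}{(1+|z^{\alpha}+b|^{2})^{2}}|{\rm d}z|^{2}$ for some $b\in\mathbb{R}$ when $\alpha\in\mathbb{Z}^{+}$. First I would show that the football hypothesis forces $b=0$ in the integer case, so that in every case the metric equals $\frac{4\alpha^{2}|z|^{2(\alpha-1)}}{(1+|z|^{2\alpha})^{2}}|{\rm d}z|^{2}$. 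One clean way: when $\alpha\in\mathbb{Z}^{+}$ the local rotation angle $2\pi\alpha$ at each singularity is a multiple of $2\pi$, so the developing map has trivial monodromy and descends to a holomorphic branched cover $D\colon S^{2}\to\mathbb{S}^{2}$ of degree $\alpha$, totally ramified over the two singular points; in suitable coordinates $D(z)=z^{\alpha}+b$, carrying the singularity at $z=0$ to $b$ and the one at $z=\infty$ to $\infty$. Lifting the minimizing great-circle arc from $b$ to $\infty$ (whose interior misses both branch values) through $D$ produces a geodesic between the two singularities of length $\pi-2\arctan|b|$, and this is distance-realizing because $D$ preserves lengths; hence the two singularities are at distance $\pi-2\arctan|b|$, which equals $\pi$ exactly when $b=0$. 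This is also precisely the content of the fact quoted from \cite{Tr89} that a non-football metric forces $2\le\alpha\in\mathbb{Z}^{+}$.

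The remaining step is the promised ``direct calculation''. Writing $z=\rho e^{\sqrt{-1}\theta}$ we have $|{\rm d}z|^{2}={\rm d}\rho^{2}+\rho^{2}{\rm d}\theta^{2}$, so
$${\rm ds^{2}}=\left(\frac{2\alpha\rho^{\alpha-1}}{1+\rho^{2\alpha}}\right)^{2}{\rm d}\rho^{2}+\left(\frac{2\alpha\rho^{\alpha}}{1+\rho^{2\alpha}}\right)^{2}{\rm d}\theta^{2}.$$
Introduce $r$ by $\tan(r/2)=\rho^{\alpha}$, that is $r=2\arctan(\rho^{\alpha})$; then ${\rm d}r=\frac{2\alpha\rho^{\alpha-1}}{1+\rho^{2\alpha}}{\rm d}\rho$ and $\frac{2\alpha\rho^{\alpha}}{1+\rho^{2\alpha}}=\alpha\,\frac{2\tan(r/2)}{1+\tan^{2}(r/2)}=\alpha\sin r$, whence ${\rm ds^{2}}={\rm d}r^{2}+(\alpha\sin r)^{2}{\rm d}\theta^{2}$. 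As $\rho$ runs over $(0,\infty)$ the variable $r$ runs over $(0,\pi)$, and $\theta\in[0,2\pi)$. It remains to check that $r$ is genuinely the geodesic distance to the singularity at $z=0$: along a ray $\theta={\rm const}$ the length from $0$ to the point of radius $\rho$ is $\int_{0}^{\rho}\frac{2\alpha s^{\alpha-1}}{1+s^{2\alpha}}\,{\rm d}s=r$, such rays are geodesics (they are orthogonal to the curves $r={\rm const}$ in the form ${\rm d}r^{2}+(\alpha\sin r)^{2}{\rm d}\theta^{2}$), and each one develops isometrically onto a minimizing sub-arc of a great circle on $\mathbb{S}^{2}$, hence is minimizing for $r<\pi$. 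This yields the stated expression on $(0,\pi)\times[0,2\pi)$.

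The only step that is not a one-line computation is the reduction to $b=0$ (equivalently, singling out which metric in the classification is the football); the rest is just the substitution $\tan(r/2)=\rho^{\alpha}$. For contrast, one could instead proceed entirely intrinsically: in geodesic polar coordinates $(r,\theta)$ about one singularity the metric takes the form ${\rm d}r^{2}+f(r,\theta)^{2}{\rm d}\theta^{2}$, and the curvature identity $-f_{rr}/f\equiv1$ together with $f(0^{+},\theta)=0$ and $f_{r}(0^{+},\theta)=\alpha$ (the latter read off from the normal form at a cone point of angle $2\pi\alpha$) forces $f=\alpha\sin r$ on the domain of the chart; but verifying that this chart actually extends over the whole rectangle $(0,\pi)\times[0,2\pi)$ again relies on the football condition and the developing map, so the classification route is the one I would actually write down.
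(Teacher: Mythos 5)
Your proof is correct. The paper offers no argument for this lemma beyond the words ``by direct calculation,'' so there is no conflict to report: your route --- invoke the classification recalled in Section~2, show that the football hypothesis forces $b=0$ when $\alpha\in\mathbb{Z}^{+}$, and then substitute $\tan(r/2)=\rho^{\alpha}$ --- supplies exactly the computation the paper leaves implicit, together with the one genuinely non-trivial step (the reduction to $b=0$) that the paper silently takes for granted when it asserts, citing Troyanov, that non-football metrics occur only for $2\le\alpha\in\mathbb{Z}^{+}$. Your argument for that step is sound: the metric is the pullback of the round metric under $z\mapsto z^{\alpha}+b$, which preserves path lengths and is totally ramified over $b$ and $\infty$, so lifting the minimizing great-circle arc gives the intrinsic distance $\pi-2\arctan|b|$ between the singularities, whence $b=0$ exactly in the football case; the subsequent change of variables and the verification that $r$ is the geodesic distance from the singularity at $z=0$ (so that $(r,\theta)$ really are geodesic coordinates, another detail the paper skips) are both correct. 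The intrinsic alternative you sketch at the end (solving $f_{rr}+f=0$ with $f\sim\alpha r$ at the cone point) is probably closer to what ``direct calculation'' was meant to suggest, but, as you rightly note, it still needs the football condition to guarantee the polar chart covers all of $(0,\pi)\times[0,2\pi)$, so the same geometric input enters either way.
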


\section{Proof of \textbf{Theorem \ref{Mthm}}}
First, consider the map $\sigma:(0,\pi)\times[0,2\pi)\rightarrow \mathbb{C}\setminus\{0\}=\mathbb{R}^{2}\setminus\{(0,0)\}$, defined by
$$\sigma(u,\theta)=(\tan \frac{u}{2}\cos\theta,\tan \frac{u}{2}\sin\theta).$$
Clearly, $\sigma$ is smooth and bijective, meaning it serves as a parameter transformation. Therefore, to prove \textbf{Theorem} \ref{Mthm}, it suffices to show that the map
$$\vec{r}:(0,\pi)\times[0,2\pi)\rightarrow \mathbb{E}^{3},(u,\theta)\mapsto \vec{r}(u,\theta)=(x^{1}(u,\theta),x^{2}(u,\theta),x^{3}(u,\theta))$$
is an immersion and $\vec{r}~^{*}{\rm ds^{2}_{E}}$ induces a football metric, where ${\rm ds^{2}_{E}}$ denotes the standard Euclidean metric on $\mathbb{E}^{3}$.\par

Since
$$\vec{r}_{u}=(B\cos u\cos(\lambda\theta),B\cos u\sin(\lambda\theta),\sqrt{1-B^{2}\cos^{2}u}),$$
$$\vec{r}_{\theta}=(-B\lambda\sin u\sin(\lambda\theta),B\lambda\sin u\cos(\lambda\theta),0),$$
and $\vec{r}_{u}$ and $\vec{r}_{\theta}$ are linearly independent, $\vec{r}$ is indeed an immersion. Furthermore, a unit normal vector $\vec{n}$ is given by $$\vec{n}=(-\sqrt{1-B^{2}\cos^{2}u}\cos(\lambda\theta),-\sqrt{1-B^{2}\cos^{2}u}\sin(\lambda\theta),B\cos u).$$

Next, we compute the second derivatives:
$$\vec{r}_{uu}=(-B\sin u\cos(\lambda\theta),-B\sin u\sin(\lambda\theta),\frac{B^{2}\cos u\sin u}{\sqrt{1-B^{2}\cos^{2}u}}),$$
$$\vec{r}_{u\theta}=(-B\lambda\sin u\sin(\lambda\theta),B\lambda\sin u\cos(\lambda\theta),0),$$
$$\vec{r}_{\theta\theta}=(-B\lambda^{2}\sin u\cos(\lambda\theta),-B\lambda^{2}\sin u\sin(\lambda\theta),0).$$
The coefficients of the first fundamental form are:
$$E=\vec{r}_{u}\cdot\vec{r}_{u}=1,~~F=\vec{r}_{u}\cdot\vec{r}_{\theta}=0,~~G=\vec{r}_{\theta}\cdot\vec{r}_{\theta}=(B\lambda)^{2}\sin^{2}u.$$
The coefficients of the second fundamental form are:
$$L=\vec{n}\cdot\vec{r}_{uu}=\frac{B\sin u}{\sqrt{1-B^{2}\cos^{2}u}},~~M=\vec{n}\cdot\vec{r}_{u\theta}=0,$$
$$N=\vec{n}\cdot\vec{r}_{\theta\theta}=\lambda^{2}B\sin u\sqrt{1-B^{2}\cos^{2}u}.$$
Thus, the first and second fundamental forms are:
$$I=\vec{r}~^{*}{\rm ds^{2}_{E}}={\rm d}u^{2}+(B\lambda)^{2}\sin^{2}u{\rm d}\theta^{2}={\rm d}u^{2}+\alpha^{2}\sin^{2}u{\rm d}\theta^{2},$$
$$II=\frac{B\sin u}{\sqrt{1-B^{2}\cos^{2}u}}{\rm d}u^{2}+\lambda^{2}B\sin u\sqrt{1-B^{2}\cos^{2}u}{\rm d}\theta^{2}.$$
Moreover, the Gauss curvature is computed to be $1$.\par

From the above proof, we derive the following proposition.
\begin{proposition}
The mean curvature $H$ of the immersion $F$ in Theorem \ref{Mthm} is given by
$$H=\frac{1}{2}(\frac{B\sin u}{\sqrt{1-B^{2}\cos^{2}u}}+\frac{\sqrt{1-B^{2}\cos^{2}u}}{B\sin u})>1,$$
where $u=2\arctan r$.
\end{proposition}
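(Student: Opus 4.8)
The plan is to read the mean curvature directly off the two fundamental forms computed in the proof of \textbf{Theorem \ref{Mthm}}. Since both $I$ and $II$ are diagonal in the $(u,\theta)$ coordinates (that is, $F=M=0$), the coordinate directions are principal directions, the principal curvatures are $\kappa_1=L/E$ and $\kappa_2=N/G$, and the mean curvature is their average, $H=\frac{1}{2}\bigl(\frac{L}{E}+\frac{N}{G}\bigr)$. Substituting $E=1$, $G=(B\lambda)^{2}\sin^{2}u$, $L=\frac{B\sin u}{\sqrt{1-B^{2}\cos^{2}u}}$, and $N=\lambda^{2}B\sin u\sqrt{1-B^{2}\cos^{2}u}$ gives $\frac{L}{E}=\frac{B\sin u}{\sqrt{1-B^{2}\cos^{2}u}}$ and, after the factor $\lambda^{2}$ cancels against $(B\lambda)^{2}/B$, $\frac{N}{G}=\frac{\sqrt{1-B^{2}\cos^{2}u}}{B\sin u}$; adding and halving yields the stated formula, with $u=2\arctan r$ coming from the reparametrization $F(r,\theta)=\vec r(2\arctan r,\theta)$.

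For the inequality I would introduce $a=a(u):=\frac{B\sin u}{\sqrt{1-B^{2}\cos^{2}u}}$ and observe that for $u\in(0,\pi)$ we have $\sin u>0$ and $1-B^{2}\cos^{2}u\geq 1-B^{2}>0$, so $a>0$ and $H=\frac{1}{2}(a+a^{-1})$. By the AM--GM inequality $\frac{1}{2}(a+a^{-1})\geq 1$, with equality exactly when $a=1$. Now $a=1$ would force $B^{2}\sin^{2}u=1-B^{2}\cos^{2}u$, i.e.\ $B^{2}=1$, contradicting the hypothesis $0<B<1$. Hence $a\neq1$ for every $u\in(0,\pi)$, the AM--GM inequality is strict, and $H>1$ throughout.

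I do not anticipate a genuine obstacle: the computation of $H$ is immediate from data already in hand, and the only point needing a line of care is ruling out the equality case of AM--GM, which reduces to the Pythagorean identity together with $B<1$. The one thing I would state explicitly for cleanliness is that $a(u)$ is a well-defined positive smooth function on $(0,\pi)$, so that the bound is valid pointwise over the whole parameter domain and therefore over $\mathbb{C}\setminus\{0\}$ for $F$.
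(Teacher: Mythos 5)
Your proposal is correct and follows essentially the same route as the paper: the formula for $H$ is read off from the already-computed fundamental forms (the paper does this implicitly, noting the proposition follows "from the above proof"), and the strict inequality is the AM--GM bound with the equality case excluded because it would force $B=1$, contradicting $0<B<1$. Your write-up is merely more explicit about the principal-curvature step and the positivity of $a(u)$ on $(0,\pi)$.
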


\begin{proof}
First, we observe that $\frac{1}{2}(\frac{B\sin u}{\sqrt{1-B^{2}\cos^{2}u}}+\frac{\sqrt{1-B^{2}\cos^{2}u}}{B\sin u})\geq1$. Equality holds if and only if
 $$\frac{B\sin u}{\sqrt{1-B^{2}\cos^{2}u}}=\frac{\sqrt{1-B^{2}\cos^{2}u}}{B\sin u}$$
 which implies $B=1$. However, this leads to a contradiction.
\end{proof}

\section{Derivation of  the map $\vec{r}$ in Section 3 }
We derive the expression for $\vec{r}$ in Section 3 using the method of rotationally symmetric surfaces, with slight modifications, as follows.\par
Let $f,g:[0,\pi]\rightarrow \mathbb{R}$ be two smooth functions satisfying
\begin{equation*}
\begin{cases}
f(0)=f(\pi)=g(0)=0,\\
(f'(u))^{2}+(g'(u))^{2}=1,\\
g'(u)\neq0,\\
f(u),g(u)>0, \forall u\in (0,\pi).
\end{cases}
\end{equation*}

Suppose $\vec{r}:(0,\pi)\times[0,2\pi)\rightarrow \mathbb{E}^{3}$, defined by
$$\vec{r}(u,\theta)=(x^{1}(u,\theta),x^{2}(u,\theta),x^{3}(u,\theta))$$
is a football with conical angles $2\pi\alpha$ and $2\pi\alpha$ in $\mathbb{E}^{3}$, where
\begin{equation*}
\begin{cases}
x^{1}(u,\theta)=f(u)\cos(\lambda\theta),\\
x^{2}(u,\theta)=f(u)\sin(\lambda\theta),\\
x^{3}(u,\theta)=g(u),
\end{cases}
\end{equation*}
and $\lambda \in\mathbb{Z}^{+}$ is a parameter.\par

Since
$$\vec{r}_{u}=(f'(u)\cos(\lambda\theta),f'(u)\sin(\lambda\theta),g'(u)),$$
$$\vec{r}_{\theta}=(-\lambda f(u)\sin(\lambda\theta),\lambda f(u)\cos(\lambda\theta),0),$$
and $\vec{r}_{u}$ and $\vec{r}_{\theta}$ are linearly independent, $\vec{r}$ is indeed an immersion. Furthermore, a unit normal vector $\vec{n}$ is given by
 $$\vec{n}=(-g'(u)\cos(\lambda\theta),-g'(u)\sin(\lambda\theta),f'(u)).$$

Next, we compute the second derivatives:
$$\vec{r}_{uu}=(f''(u)\cos(\lambda\theta),f''(u)\sin(\lambda\theta),g''(u)),$$
$$\vec{r}_{u\theta}=(-\lambda f'(u)\sin(\lambda\theta),\lambda f'(u)\cos(\lambda\theta),0),$$
$$\vec{r}_{\theta\theta}=(-\lambda^{2} f(u)\cos(\lambda\theta),-\lambda^{2} f(u)\sin(\lambda\theta),0).$$

The coefficients of the first fundamental form are:
$$E=\vec{r}_{u}\cdot\vec{r}_{u}=1,~~~F=\vec{r}_{u}\cdot\vec{r}_{\theta}=0,~~~G=\vec{r}_{\theta}\cdot\vec{r}_{\theta}=(\lambda f(u))^{2}.$$

The coefficients of the second fundamental form are:
$$L=\vec{n}\cdot\vec{r}_{uu}=f'(u)g''(u)-f''(u)g'(u),~~M=\vec{n}\cdot\vec{r}_{u\theta}=0,$$
$$N=\vec{n}\cdot\vec{r}_{\theta\theta}=\lambda^{2}f(u)g'(u).$$

Since $(f'(u))^{2}+(g'(u))^{2}=1$, we have
$$f'(u)f''(u)+g'(u)g''(u)=0,$$
and
$$(f'(u)g''(u)-f''(u)g'(u))g'(u)=-f''(u).$$
Moreover,
$$L=-\frac{f''(u)}{g'(u)}.$$

Thus, the first and second fundamental forms are:
$$I={\rm d}u^{2}+(\lambda f(u))^{2}{\rm d}\theta^{2},$$
$$II=-\frac{f''(u)}{g'(u)}{\rm d}u^{2}+\lambda^{2}f(u)g'(u){\rm d}\theta^{2}.$$

Since the Gauss curvature equals $1$,  we obtain the following equation:
$$f''(u)+f(u)=0.$$

Thus, $f$ can be expressed as
$$f(u)=A\cos u+B\sin u,$$
where $A$ and $B$ are constant.

Noting that $f(0)=f(\pi)=0$, we obtain
$$f(u)=B\sin u,$$
and
$$g(u)=\int_{0}^{u}\sqrt{1-B^{2}\cos^{2}t}dt.$$

Furthermore, the first fundamental form is
$$I={\rm d}u^{2}+(B\lambda)^{2}\sin^{2}u{\rm d}\theta^{2}.$$

Thus, by \textbf{lemma} \ref{Lem}, $\lambda B=\alpha$, i.e, $\lambda=\frac{\alpha}{B}\in\mathbb{Z}^{+}$. \par
Consequently,
\begin{equation*}
\begin{cases}
x^{1}(u,\theta)=B\sin u\cos(\lambda\theta),\\
x^{2}(u,\theta)=B\sin u\sin(\lambda\theta),\\
x^{3}(u,\theta)=\int_{0}^{u}\sqrt{1-B^{2}\cos^{2}t}dt,
\end{cases}
\end{equation*}
where $\lambda B=\alpha$.

\section{Some explicit examples}
In this section, we will present graphical representations of these immersions based on \textbf{Theorem} \ref{Mthm}.

\begin{example}
Let $\alpha=\frac{1}{2}$ and $B=\frac{1}{2}$, which implies $\lambda=1$. The isometric immersion $F$ can then be expressed as (see Figure 1)
$$F(r,\theta)=(x^{1}(2\arctan r,\theta),x^{2}(2\arctan r,\theta),x^{3}(2\arctan r,\theta)),$$
where
\begin{equation*}
\begin{cases}
x^{1}(u,\theta)=\frac{1}{2}\sin u\cos\theta,\\
x^{2}(u,\theta)=\frac{1}{2}\sin u\sin\theta,\\
x^{3}(u,\theta)=\int^{u}_{0}\sqrt{1-\frac{1}{4}\sin^{2}t}dt.
\end{cases}
\end{equation*}

\begin{figure}[htbp]
\centering
\includegraphics[scale=0.40]{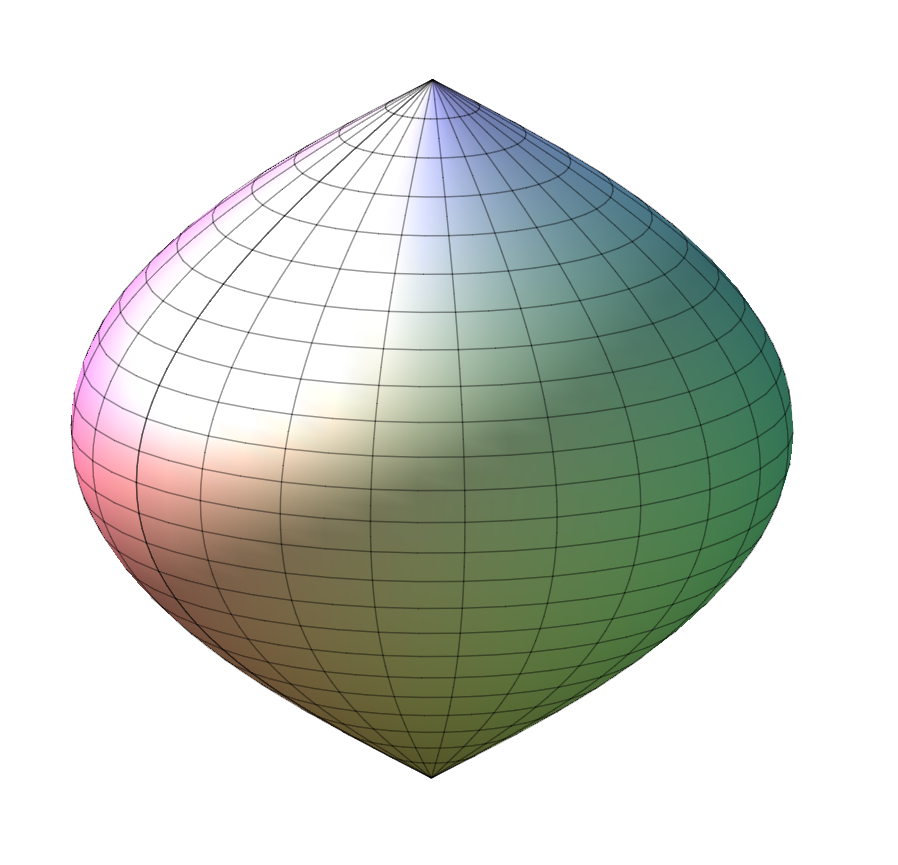}
\caption{\text{ Football of angles $\pi$ and $\pi$.} }
\end{figure}

\begin{figure}[htbp]
\centering
\includegraphics[scale=0.40]{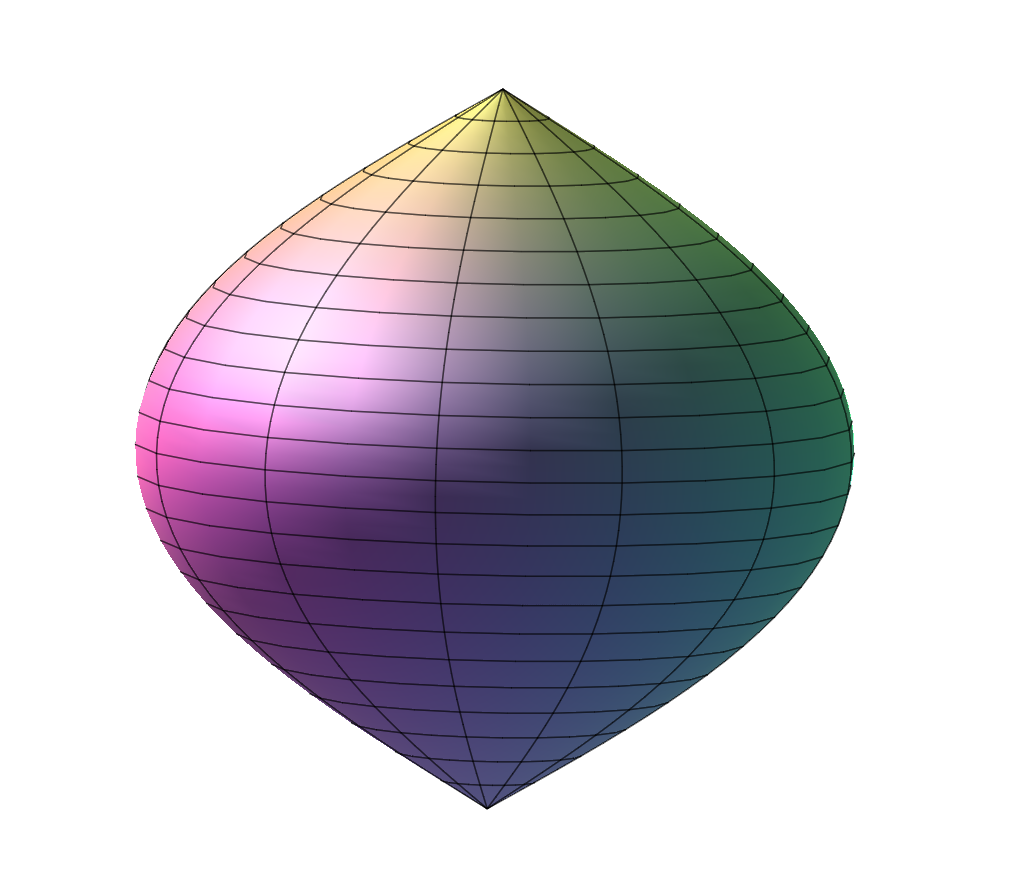}
\caption{\text{ Football of angles $2\pi$ and $2\pi$.} }
\end{figure}
\end{example}

\begin{example}
Let $\alpha=1$ and $B=\frac{1}{2}$, which implies $\lambda=2$. The isometric immersion $F$ can then be expressed as (see Figure 2)
$$F(r,\theta)=(x^{1}(2\arctan r,\theta),x^{2}(2\arctan r,\theta),x^{3}(2\arctan r,\theta)),$$
where
\begin{equation*}
\begin{cases}
x^{1}(u,\theta)=\frac{1}{2}\sin u\cos2\theta,\\
x^{2}(u,\theta)=\frac{1}{2}\sin u\sin2\theta,\\
x^{3}(u,\theta)=\int^{u}_{0}\sqrt{1-\frac{1}{4}\sin^{2}t}dt.
\end{cases}
\end{equation*}

\end{example}

\begin{remark}
Since a singularity of angle $2\pi$ of a constant curvature one metric is smooth, example 2 shows a new shape in of the standard metric on $S^{2}$ in $\mathbb{E}^{3}$.
\end{remark}

\begin{example}
Let $\alpha=2$ and $B=\frac{1}{2}$, which implies $\lambda=4$. The isometric immersion $F$ can then be expressed as (see Figure 3)
$$F(r,\theta)=(x^{1}(2\arctan r,\theta),x^{2}(2\arctan r,\theta),x^{3}(2\arctan r,\theta)),$$
where
\begin{equation*}
\begin{cases}
x^{1}(u,\theta)=\frac{1}{2}\sin u\cos4\theta,\\
x^{2}(u,\theta)=\frac{1}{2}\sin u\sin4\theta,\\
x^{3}(u,\theta)=\int^{u}_{0}\sqrt{1-\frac{1}{4}\sin^{2}t}dt.
\end{cases}
\end{equation*}

\begin{figure}[htbp]
\centering
\includegraphics[scale=0.4]{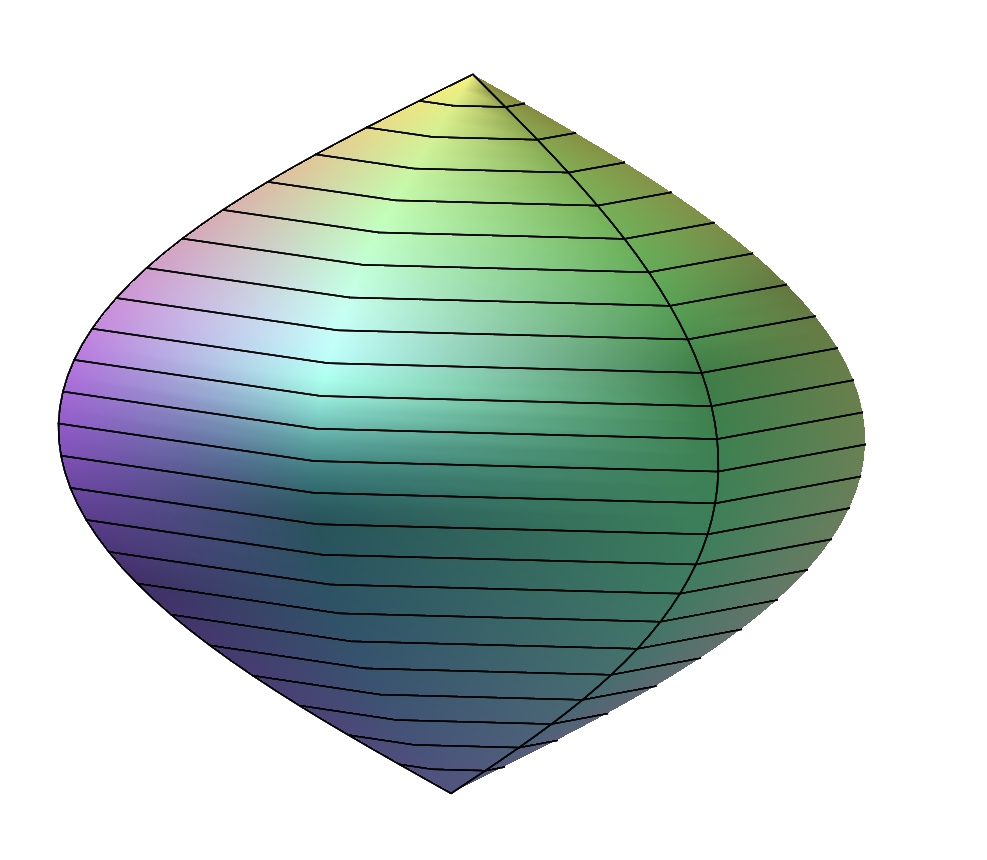}
\caption{\text{ Football of angles $4\pi$ and $4\pi$.} }
\end{figure}

\end{example}

\begin{example}
Let $\alpha=2$ and $B=\frac{1}{4}$, which implies $\lambda=8$. The isometric immersion $F$ can then be expressed as (see Figure 4)
$$F(r,\theta)=(x^{1}(2\arctan r,\theta),x^{2}(2\arctan r,\theta),x^{3}(2\arctan r,\theta)),$$
where
\begin{equation*}
\begin{cases}
x^{1}(u,\theta)=\frac{1}{4}\sin u\cos4\theta,\\
x^{2}(u,\theta)=\frac{1}{4}\sin u\sin4\theta,\\
x^{3}(u,\theta)=\int^{u}_{0}\sqrt{1-\frac{1}{16}\sin^{2}t}dt.
\end{cases}
\end{equation*}

 \begin{figure}[htbp]
\centering
\includegraphics[scale=0.45]{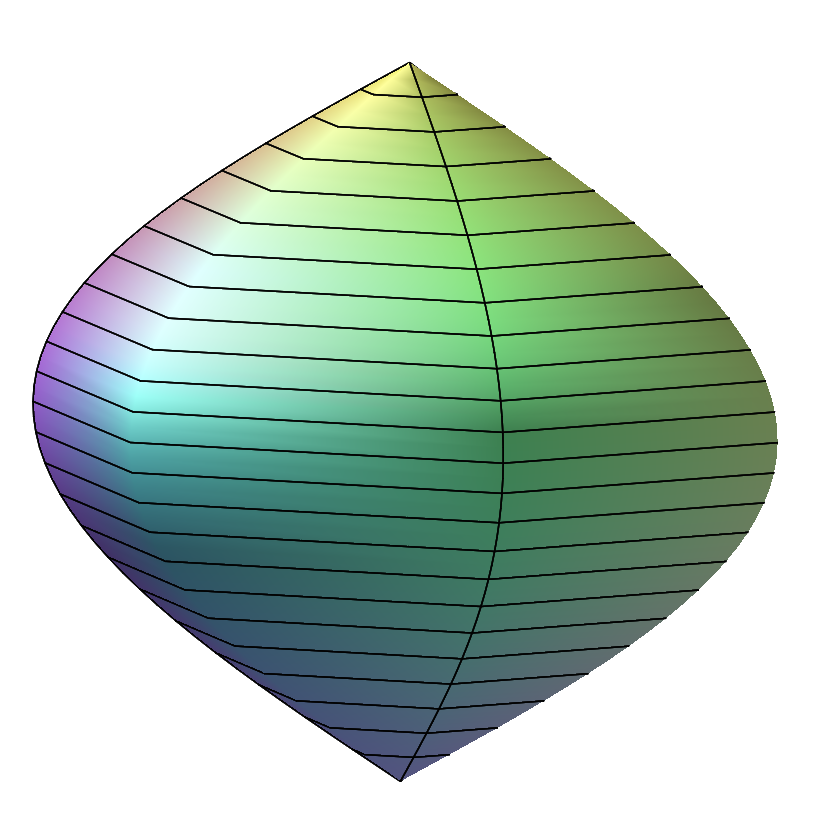}
\caption{\text{ Football of angles $4\pi$ and $4\pi$.} }
\end{figure}

\end{example}

\begin{example}
Let $\alpha=2$ and $B=\frac{1}{8}$, which implies $\lambda=16$. The isometric immersion $F$ can then be expressed as (see Figure 5)
$$F(r,\theta)=(x^{1}(2\arctan r,\theta),x^{2}(2\arctan r,\theta),x^{3}(2\arctan r,\theta)),$$
where
\begin{equation*}
\begin{cases}
x^{1}(u,\theta)=\frac{1}{8}\sin u\cos16\theta,\\
x^{2}(u,\theta)=\frac{1}{8}\sin u\sin16\theta,\\
x^{3}(u,\theta)=\int^{u}_{0}\sqrt{1-\frac{1}{64}\sin^{2}t}dt.
\end{cases}
\end{equation*}

 \begin{figure}[htbp]
\centering
\includegraphics[scale=0.45]{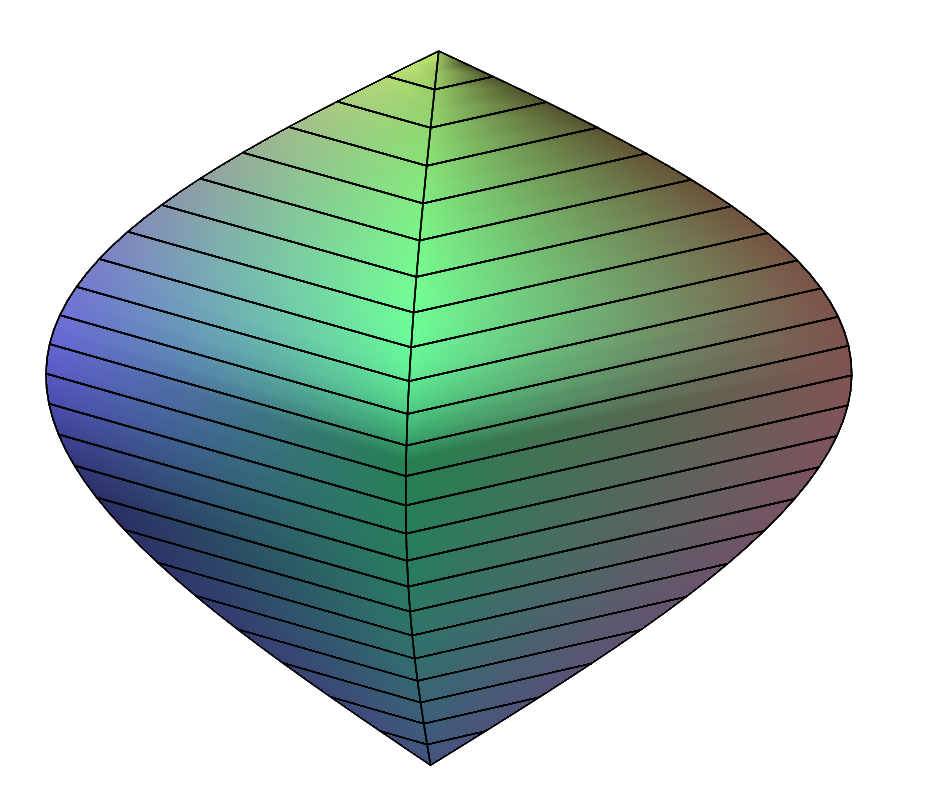}
\caption{\text{ Football of angles $4\pi$ and $4\pi$.} }
\end{figure}

\end{example}

Finally, we will give an example of constant curvature 1 metric on $S^{2}$ with two conical singularities by using the diagram:

$$ \xymatrix{
  S^{2} \ar[rr]^{\tau} \ar[dr]_{F}
                &  &    S^{2} \ar[dl]^{i}    \\
                & \mathbb{E} ^{3}                }$$
\begin{example}
Let $\tau:\mathbb{C}\rightarrow \mathbb{C}$ be a holomorphic function defined by $\tau(z)=z^{2}+1$, which implies $\alpha=2$. The isometric immersion $F$ can then be expressed as (see Figure 6)
$$F(r,\theta)=(x^{1}(r,\theta),x^{2}(r,\theta),x^{3}(r,\theta)),$$
where
\begin{equation*}
\begin{cases}
x^{1}(r,\theta)=\frac{2(r^{2}\cos(2\theta)+1)}{1+(r^{2}\cos(2\theta)+1)^{2}+r^{4}\sin^{2}(2\theta)},\\
x^{2}(r,\theta)=\frac{2r^{2}\sin(2\theta)}{1+(r^{2}\cos(2\theta)+1)^{2}+r^{4}\sin^{2}(2\theta)},\\
x^{3}(r,\theta)=\frac{(r^{2}\cos(2\theta)+1)^{2}+r^{4}\sin^{2}(2\theta)-1}{1+(r^{2}\cos(2\theta)+1)^{2}+r^{4}\sin^{2}(2\theta)},
\end{cases}
\end{equation*}
where $z=re^{i\theta}$.

 \begin{figure}[htbp]
\centering
\includegraphics[scale=0.45]{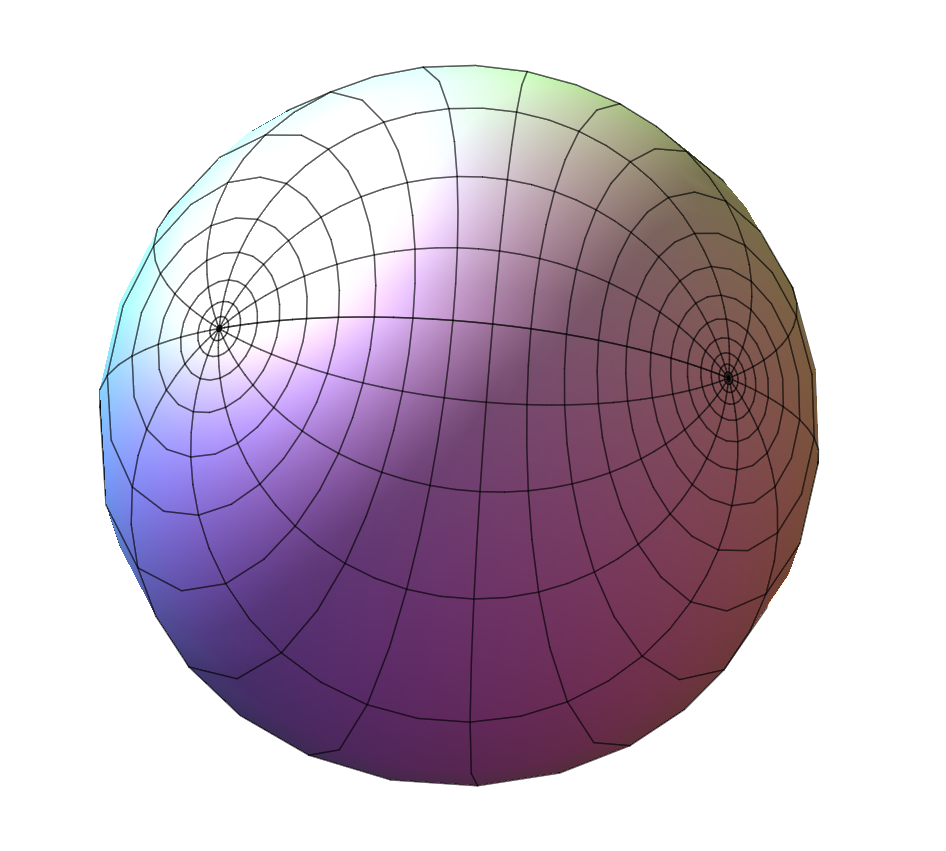}
\caption{\text{ Two conical angles are $4\pi$ and $4\pi$.} }
\end{figure}

\end{example}


\textbf{Declarations}

\textbf{Data Availability Statement}  This manuscript has no associated data.

\textbf{Competing interests} On behalf of all authors, the corresponding author states that there is no conflict of interest.

\smallskip

\noindent
Zhiqiang Wei\\
School of Mathematics and Statistics, Henan University, Kaifeng 475004 P. R. China\\
Center for Applied Mathematics of Henan Province, Henan University, Zheng zhou 450046 P. R. China\\
Email: weizhiqiang15@mails.ucas.edu.cn. ~or~10100123@vip.henu.edu.cn.


\begin{thebibliography}{99}

\bibitem{DFA11}
Bartolucci D., De Marchis F., Malchiodi A. : Supercritical conformal metrics on surfaces with conical singularities, Int. Math. Res. Not., 24 (2011), 5625-5643

\bibitem{Br88}
Bryant R. L. : Surfaces of mean curvature one in hyperbolic space, Ast\'{e}risque, no. 154-155 (1988), 321-347


\bibitem{Xu15}
Chen Q., Wang W., Wu Y. Y., Xu B. : Conformal metrics with constant curvature one and finitely many conical singularities on compact Riemann surfaces, Pacific J. Math., 273 (2015), no. 1, 75-100

\bibitem{CL91}
Chen W. X., Li C. : Classification of solutions of some nonlinear elliptic equations, Duke Math. J., 63 (3) (1991), 615-622

\bibitem{Er04}
Eremenko A. : Metrics of positive curvature with conic singularities on the sphere,
Proc. Amer. Math. Soc., 132(11) (2004), 3349-3355


\bibitem{Er21}
Eremenko A. : Metrics of constant positive curvature with conic singularities. A survey. arXiv:2103.13364

\bibitem{Er23}
Eremenko A., Mondello M., Panov D. : Moduli of spherical tori with one conical point, Geom. Topol., 27 (9) (2023), 3619-3698



\bibitem{JLP13}
Jos\'{e} A. G., Laurent H., Pablo M. : Surfaces of constant curvature in $\mathbb{R}^{3}$ with isolated singularities, Adv. Math., 241 (1) (2013), 103-126


\bibitem{LSX21}
Li L. G., Song J. J., Xu B. : Irreducible cone spherical metrics and stable extensions of two line bundles, Adv. Math., 388 (2021), 36 pp



\bibitem{LT92}
Luo F., Tian G. : Liouville equation and spherical convex polytopes. Proc.
Amer. Math. Soc., 116 (4) (1992), 1119-1129


\bibitem{MZ20}
Mazzeo R., Zhu X. W. : Conical metrics on Riemann surfaces, I: The compactified configuration space and regularity, Geom. Topol., 24 (1) (2020), 309-372

\bibitem{MZ22}
Mazzeo R., Zhu X. W. : Conical metrics on Riemann surfaces, II: Spherical metrics, Int. Math. Res. Not., (12) (2022), 9044-9113


\bibitem{McO88}
McOwen R. C. : Point singularities and conformal metrics on Riemann surfaces,
Proc. Amer. Math. Soc., 103 (1) (1988), 222-224

\bibitem{MD16}
Mondello D., Panov D. : Spherical metrics with conical singularities on a 2-sphere: angle constraints, Int. Math. Res. Not., (16) (2016), 4937-4995

\bibitem{MD19}
Mondello D., Panov D. : Spherical surfaces with conical points: systole inequality and moduli spaces with many connected component, Geom. Funct. Anal., 29 (4) (2019), 1110-1193


\bibitem{Tah22}
Tahar G. : Geometric decompositions of surfaces with spherical metric and conical singularities, Q. J. Math., 73 (2) (2022), 657-678

\bibitem{Tr89}
Troyanov M. : Metrics of constant curvature on a sphere with two conical singularities, Differential geometry, 296-306, Lecture Notes in Math., 1410, Springer, Berlin, 1989

\bibitem{Tr91}
Troyanov M. : Prescribing curvature on compact surfaces with conical singularities, Trans. Amer. Math. Soc., 324 (2) (1991), 793-821

\bibitem{UY00}
Umehara M., Yamada K. : Metrics of constant curvature 1 with three conical singularities on the 2-sphere, Illinois J. Math., 44 (1) (2000), 72-94

\bibitem{WWX24}
Wei Z. Q., Wu Y. Y., Xu B. : Geometric sturcture and the existence of reducible spherical conical metrics, arXiv:2211.11929v2


\end{thebibliography}
\end{document}